\DeclareSymbolFont{cyrletters}{OT2}{wncyr}{m}{n}
\DeclareMathSymbol{\Sha}{\mathalpha}{cyrletters}{"58}
\newtheorem{theorem}{Theorem}[section]
\newtheorem{proposition}[theorem]{Proposition}
\newtheorem{corollary}[theorem]{Corollary}
\theoremstyle{definition}
\newtheorem*{ack}{Acknowledgements}
\newtheorem*{con}{Conventions}
\newtheorem{remark}[theorem]{Remark}
\newtheorem{definition}[theorem]{Definition}
\numberwithin{equation}{section} \numberwithin{figure}{section}
\DeclareMathOperator{\Pic}{Pic} 
\DeclareMathOperator{\Aut}{Aut} 
\DeclareMathOperator{\Spec}{Spec}
\DeclareMathOperator{\tr}{tr}
\DeclareMathOperator{\chr}{char}
\DeclareMathOperator{\Lin}{Lin}
\newcommand\PP{\mathbb{P}}
\newcommand\ZZ{\mathbb{Z}}
\newcommand\QQ{\mathbb{Q}}
\newcommand\CC{\mathbb{C}}
\newcommand\OO{\mathcal{O}}
\newcommand{\et}{\textrm{\'{e}t}}
\title[Smooth hypersurfaces and level structure]{The moduli of smooth hypersurfaces with level structure}
\author{A. Javanpeykar}
\address{A. Javanpeykar \\
Institut f\"{u}r Mathematik\\
Johannes Gutenberg-Universit\"{a}t Mainz\\
Staudingerweg 9, 55099 Mainz\\
Germany.}
\email{peykar@uni-mainz.de}
\author{D. Loughran}
\address{D. Loughran \\ School of Mathematics \\
University of Manchester \\
Oxford Road \\
Manchester\\
M13 9PL\\
UK.}
\email{daniel.loughran@manchester.ac.uk}
\subjclass[2010]
{14D23  
(14K30,  
14J50,  
14C34)}  
\keywords{Hypersurfaces, moduli spaces, level structures, induced automorphisms, intermediate Jacobians, Torelli theorems.}
\begin{document}
 
\begin{abstract}
	We construct the moduli space of smooth hypersurfaces with level $N$ structure over $\ZZ[1/N]$. As an application we show that, for $N$ large enough, the stack of smooth hypersurfaces over $\ZZ[1/N]$ is uniformisable by a smooth affine scheme. To prove our results, we use the Lefschetz trace formula to show that automorphisms of smooth hypersurfaces act faithfully on their cohomology. We also prove a global Torelli theorem for smooth cubic threefolds over fields of odd characteristic.
\end{abstract}

\maketitle

\thispagestyle{empty}

\section{Introduction}

The moduli of smooth proper curves of genus $g$ with $g\geq 2$, or principally polarized abelian schemes of fixed dimension, or polarized K3 surfaces of fixed degree are smooth finite type separated Deligne-Mumford stacks over $\mathbb Z$. All these stacks admit \emph{level structures} \cite{MoretBailly, Rizov, Serre}. Such structures are usually introduced to help rigidify the moduli problem and lead to interesting theory and applications \cite{PoppI}.

The aim of this note is to construct a moduli stack of smooth hypersurfaces 
with level structure. We will define a level $N$ structure  on a smooth hypersurface to be a trivialization of its cohomology with $\mathbb Z/N\ZZ$-coefficients (see \S\ref{sec:level} for details). Key to our construction is the following result on the action of an automorphism of a smooth hypersurface on its cohomology.  
 
\begin{theorem}\label{thm:ind_auts}  Let $d\geq 3$ and $n\geq 1$ be integers with $(d,n)\neq (3,1)$. Let $k$ be a field and let $\ell$ be a prime number which is invertible in $k$. Let $X$ be a smooth hypersurface of degree $d$ in $\mathbb P^{n+1}_k$, and let $\sigma \in \mathrm{Aut}(X)$ be non-trivial. If $\mathrm{char}(k) = 0$ or the order of $\sigma$ is coprime to $\mathrm{char}(k)$, then $\sigma$ acts non-trivially on $\mathrm{H}^n(X_{\bar k,\et},\QQ_\ell)$. 
\end{theorem}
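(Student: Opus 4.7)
The plan is to argue by contradiction, relying on the \'etale Lefschetz trace formula. I assume $\sigma^*$ acts trivially on $H^n(X_{\bar k, \et}, \QQ_\ell)$ and aim to deduce $\sigma = 1$; after base change I may assume $k = \bar k$. First I would invoke the theorem of Matsumura--Monsky: under the hypotheses $d \geq 3$ and $(d,n) \neq (3,1)$, every automorphism of $X$ extends to a linear automorphism $\tilde\sigma$ of the ambient $\PP^{n+1}$ (this is the only place where the exclusion $(3,1)$ is used). Since $\sigma^*$ then fixes the hyperplane class, the weak Lefschetz theorem combined with Poincar\'e duality on $X$ implies that $\sigma^*$ acts trivially on $H^i(X_{\bar k,\et}, \QQ_\ell)$ for every $i \neq n$ -- those groups being either zero or generated by powers of the hyperplane class. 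Together with the standing hypothesis, $\sigma^*$ is thus trivial on the whole $\ell$-adic cohomology, and the Grothendieck--Lefschetz trace formula gives $L(\sigma) = \chi(X)$. On the other hand, the hypothesis on $|\sigma|$ and $\chr(k)$ makes $X^\sigma$ smooth by Iversen's theorem, and the smooth-fixed-locus version of the trace formula yields $L(\sigma) = \chi(X^\sigma)$. Everything thus reduces to showing that $\chi(X^\sigma) = \chi(X)$ cannot hold for a non-trivial $\sigma$.

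Next, I analyse $X^\sigma$ geometrically. Lifting $\tilde\sigma$ to $\GL_{n+2}(k)$ and writing $V_1, \ldots, V_r$ for its eigenspaces of dimensions $k_j = \dim V_j$, the fixed locus of $\tilde\sigma$ in $\PP^{n+1}$ is the disjoint union $\bigsqcup_j \PP(V_j)$, so
\[
X^\sigma = \bigsqcup_{j=1}^r X \cap \PP(V_j),
\]
each component being either a whole $\PP(V_j)$ contained in $X$ or a smooth degree-$d$ hypersurface in $\PP(V_j)$. Using the standard formula $\chi(X_d^m) = \frac{(1-d)^{m+2} - 1}{d} + m + 2$ for smooth hypersurfaces, the equality $\chi(X^\sigma) = \chi(X)$ reduces to a numerical identity essentially of the form
\[
\sum_{j=1}^{r} (1-d)^{k_j} \;=\; (1-d)^{n+2} + r - 1
\]
over the partition $(k_1, \ldots, k_r)$ of $n+2$; moreover $\sigma$ is non-trivial if and only if $\tilde\sigma$ is non-scalar, i.e.\ if and only if $r \geq 2$.

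The principal obstacle is the elementary but delicate task of ruling out this identity for all $r \geq 2$ and $d \geq 3$. My approach is to base the argument on the strict inequality $(d-1)^{a+b} > (d-1)^a + (d-1)^b$, valid for $d \geq 3$ and $a, b \geq 1$ (with boundary equality only at $d=3$, $a=b=1$), combined with careful sign bookkeeping since $1-d < 0$, in order to force $\bigl|\sum_j (1-d)^{k_j}\bigr| < \bigl|(1-d)^{n+2}\bigr|$ whenever $r \geq 2$. A short case analysis should dispose of the boundary case, as well as the variant in which some $\PP(V_j)$ lies entirely on $X$ (a genuine phenomenon, e.g.\ on the Fermat hypersurface), where the $j$th contribution to $\chi(X^\sigma)$ becomes $k_j$ in place of $\chi(X_d^{k_j - 2})$. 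The reduction machinery (Matsumura--Monsky, weak Lefschetz, Lefschetz trace formula, smoothness of tame fixed loci) is standard; the arithmetic of the numerical identity is where the real work lies.
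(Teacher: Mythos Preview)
Your overall strategy is essentially the paper's: reduce to linear automorphisms, apply the Lefschetz trace formula using smoothness of the tame fixed locus, and derive a numerical contradiction from the Euler-characteristic formula for hypersurfaces. The eigenspace decomposition of the fixed locus and the resulting identity in powers of $1-d$ match the paper almost verbatim.

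There is, however, a genuine gap in your reduction step. The theorem of Matsumura--Monsky does \emph{not} give $\Aut(X)=\Lin(X)$ for all $(d,n)\neq(3,1)$: it also fails for $(d,n)=(4,2)$, since a smooth quartic surface is a K3 surface and can carry (infinitely many) non-linear automorphisms. The paper treats this case separately: if $\sigma$ acts trivially on $\mathrm{H}^2(X_{\bar k,\et},\QQ_\ell)$, then, because the cycle class map is injective for K3 surfaces, $\sigma$ acts trivially on $\Pic X_{\bar k}$ and in particular fixes the class of $\mathcal{O}_X(1)$; hence $\sigma$ is linear after all, and one is back in the situation of the Lefschetz-trace argument. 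Without this extra step your proof does not cover quartic surfaces.

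Two smaller remarks. First, the exclusion $(d,n)\neq(3,1)$ is \emph{not} used only in Matsumura--Monsky: it reappears in the numerical analysis. Indeed, a translation by a non-trivial $3$-torsion point on a plane cubic is a linear automorphism with $X^\sigma=\emptyset$ and $\chi(X)=0$, so $\chi(X^\sigma)=\chi(X)$ holds and your contradiction fails there; the paper isolates $n=1$ as a separate (easy) case precisely to see that $d=3$ must be excluded. Second, besides the possibility $\PP(V_j)\subset X$ that you flag, there is a third possibility $X\cap\PP(V_j)=\emptyset$ (necessarily with $k_j=1$), which contributes a term $-d$ rather than $(1-d)^{k_j}$ or $k_j$ to your identity; the paper keeps track of all three cases via parameters $r\le s\le t$, and you should do the same.
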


The question of whether the automorphism group of a variety acts faithfully on its cohomology has been investigated for other families of varieties, such as Enriques surfaces  \cite{MukaiNamikawa},  hyperk\"ahler varieties \cite[Prop.~9]{Beauville2}, \cite[\S3]{Boissiere}, \cite[\S2.4]{Rizov}, and some surfaces of general type \cite{C3, Peters}.

Let $d\geq 3$ and let $n\geq 1$ be integers with $(d,n) \neq (3,1)$. Let $\mathcal C_{d,n}$ be the stack of smooth hypersurfaces of degree $d$ in $\mathbb P^{n+1}$ (see \S \ref{sec:level}). Mumford has shown that  $\mathcal C_{d,n}$ is  a smooth finite type separated Deligne-Mumford stack over $\mathbb Z$ whose coarse moduli space is an affine scheme. 
For an integer $N\geq 1$, we will define an algebraic stack $\smash{\mathcal C_{d,n}^{[N]}}$  over $\ZZ[1/N]$ parametrizing smooth hypersurfaces with a level $N$ structure, and our main result (Theorem \ref{thm:level}) is that it is representable by a smooth affine  scheme when $N$ is large enough.

To state our result, let $a_{d,n} $ be the product of all primes $p$  for which there exist an algebraically closed field $k$ and a smooth hypersurface $X$ of degree $d$ in $\PP^{n+1}_k$ with a linear automorphism of order $p$ (Definition \ref{defn:adn}). 
We will show in Section \ref{sec:level}   that $a_{d,n}$ is a well-defined positive integer. It divides the product of all orders of linear automorphism groups of smooth hypersurfaces of degree $d$ in $\mathbb P^{n+1}_{k}$, as $k$ runs over all algebraically closed fields.     In particular,   Theorem \ref{thm:ind_auts} shows that any non-trivial automorphism $\sigma$ acts faithfully on the cohomology of $X$ as long as the characteristic of $k$ is coprime to $a_{d,n}$.  
 
\begin{theorem}\label{theorem2}
Let $d\geq 3$ and let $n\geq 1$ be integers. For all $N\geq 3$ coprime to $a_{d,n}$, the stack $\mathcal C_{d,n,\ZZ[1/N]}$ is uniformisable by a smooth affine scheme $U$   over $\mathbb Z[1/N]$.
\end{theorem}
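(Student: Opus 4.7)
The plan is to take $U:=\mathcal{C}_{d,n,\ZZ[1/N]}^{[N]}$, the stack of smooth degree-$d$ hypersurfaces in $\PP^{n+1}$ equipped with a trivialization of $\mathrm{H}^n(X_{\et},\ZZ/N)$ constructed in \S\ref{sec:level}, and to show that $U$ is representable by a smooth affine scheme over $\ZZ[1/N]$. By construction the forgetful morphism $U\to\mathcal{C}_{d,n,\ZZ[1/N]}$ is a torsor under a finite étale group scheme, hence finite étale and surjective; once $U$ is shown to be a smooth affine scheme, this map is the desired uniformisation.

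The crucial step is to show that $U$ has trivial automorphism groups at geometric points, so that $U$ is an algebraic space. Let $k$ be an algebraically closed field with $\chr(k)\nmid N$, let $(X,\alpha)\in U(k)$, and let $\sigma\in\Aut(X,\alpha)$, so $\sigma$ is an automorphism of $X$ acting trivially on $\mathrm{H}^n(X_{\et},\ZZ/N)$. Assume $\sigma\ne\mathrm{id}$ and let $m$ denote its order. Since $d\ge 3$ and $(d,n)\ne(3,1)$, Matsumura--Monsky realises $\Aut(X)$ inside $\PGL_{n+2}(k)$, so $\sigma$ is linear and, by considering the powers $\sigma^{m/p}$ for each prime $p\mid m$, every prime divisor of $m$ lies in $a_{d,n}$; combined with $\gcd(N,a_{d,n})=1$ this forces $\gcd(m,N)=1$. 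Since $N\ge 3$ we can pick a prime power $\ell^s\mid N$ with $\ell^s\ge 3$; then Minkowski's lemma (the kernel of $\GL(\ZZ_\ell)\to\GL(\ZZ/\ell^s)$ is pro-$\ell$ for $\ell^s\ge 3$) together with $\gcd(m,\ell)=1$ upgrades the trivial action on $\mathrm{H}^n(X_{\et},\ZZ/\ell^s)$ to a trivial action on $\mathrm{H}^n(X_{\et},\QQ_\ell)$. Provided $\chr(k)\nmid m$, this contradicts Theorem~\ref{thm:ind_auts}, giving $\sigma=\mathrm{id}$.

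Once automorphism groups are trivial, $U$ is an algebraic space, smooth over $\ZZ[1/N]$ via the étale morphism to the smooth Deligne--Mumford stack $\mathcal{C}_{d,n,\ZZ[1/N]}$ (Mumford). For affineness, invoke Mumford's theorem that the coarse moduli space $|\mathcal{C}_{d,n,\ZZ[1/N]}|$ is affine: the composition $U\to\mathcal{C}_{d,n,\ZZ[1/N]}\to|\mathcal{C}_{d,n,\ZZ[1/N]}|$ is proper and quasi-finite, hence finite, and a separated algebraic space finite over an affine scheme is itself an affine scheme. The main obstacle is the faithful-action step in residue characteristics $p$ dividing $a_{d,n}$ (allowed over $\ZZ[1/N]$), where $m$ may be divisible by $\chr(k)=p$ and Theorem~\ref{thm:ind_auts} does not apply directly; I would handle this by applying the above argument to the tame part $\sigma^{p^a}$ and separately ruling out non-trivial linear automorphisms of $p$-power order preserving a level structure, using Lefschetz trace formula computations along the lines of the proof of Theorem~\ref{thm:ind_auts}.
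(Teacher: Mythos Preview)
For $(d,n) \neq (3,1)$ your outline is essentially the paper's own argument (Theorem~\ref{thm:level}): take $U = \mathcal{C}_{d,n}^{[N]}$, kill the inertia via a Minkowski-type lemma together with Theorem~\ref{thm:ind_auts}, and deduce affineness from finiteness over the affine coarse space of $\mathcal{C}_{d,n}$. The packaging of the Minkowski step and of the affineness argument differs only cosmetically.

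The genuine gap is the case $(d,n) = (3,1)$, which is included in the statement but which you silently assume away when invoking Matsumura--Monsky. Your level-structure approach \emph{cannot} work for plane cubics: by Remark~\ref{remark} the kernel of $\Lin(X) \to \Aut(\mathrm{H}^1(X_{\et}, \QQ_\ell))$ is $J[3](k)$, nontrivial in every characteristic $\neq 3$, so $\mathcal{C}_{3,1}^{[N]}$ has nontrivial inertia at every geometric point and is never an algebraic space. The paper treats this case by a completely different construction: the Jacobian exhibits $\mathcal{C}_{3,1,\ZZ[1/3]}$ as a neutral $\mathcal{E}[3]$-gerbe over the stack $\mathcal{M}$ of elliptic curves, and one composes a known finite \'etale cover $U \to \mathcal{M}$ (for instance $\mathcal{Y}(3)$) with the canonical finite \'etale morphism $U \to B(\mathcal{E}[3]_U)$ to uniformise $\mathcal{C}_{3,1,\ZZ[1/3]}$.

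A secondary issue: your self-identified ``main obstacle'' with wild automorphisms in residue characteristics $p \mid a_{d,n}$ is left unresolved. The proposed fix via trace-formula computations ``along the lines of the proof of Theorem~\ref{thm:ind_auts}'' is precisely what the paper notes breaks down for wild $\sigma$, since $X^\sigma$ need not be smooth and \eqref{eqn:ltf} no longer holds in that simple form. The paper's own proof does not elaborate here either, invoking Corollary~\ref{cor:ind_aut} directly.
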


Here, following Noohi \cite[Def.~6.1]{Noohi}, we say that an algebraic stack $X$ is uniformisable  if there exist an algebraic space $U$ and a finite \'etale morphism $U\to X$.  

Versions of Theorem \ref{thm:ind_auts} and Theorem \ref{theorem2} were obtained by the authors in \cite{JL} for other complete intersections in projective space
\emph{over $\CC$}. These constructions used the infinitesimal Torelli theorem for smooth complete intersections and spreading out arguments, hence only showed  
that $\mathcal C_{d,n}$ becomes uniformisable after base-changing to some non-explicit arithmetic curve $B = \Spec \mathcal O_K[S^{-1}]$ (cf.~the proof of \cite[Prop.~2.12]{JL}). The significance
of Theorem \ref{theorem2} is that we obtain an uniformisation over $\ZZ[1/N]$ for sufficiently large $N$.

To illustrate how one can use the faithfulness of the action of the automorphism group on \'etale cohomology, we finish with an application to the global Torelli problem for smooth cubic threefolds (here it is already known that any automorphism acts faithfully on cohomology, by work of Pan \cite[Thm.~1.2]{Pan}).  A famous theorem of Clemens and Griffiths \cite{CG72} states that any cubic threefold over $\CC$ is uniquely determined by its \emph{intermediate Jacobian}. The intermediate Jacobian is usually defined via transcendental techniques, however it has been known for some time that this theory can be made to work for cubic threefolds over other fields \cite{BS67, Del72}, with a definitive construction over schemes of characteristic not equal to $2$ being given recently by Achter in \cite{Ach}. In \cite{JL} we obtained an extension of Clemens and Griffiths' Torelli theorem to arbitrary fields of characteristic $0$. We will use  the Torelli theorem of Beauville \cite{Beau82} over algebraically closed fields of characteristic not equal to $2$   to obtain the following.

\begin{theorem}\label{thm:Torelli}
Let $k$ be a field of characteristic not equal to $2$ and let $X_1,X_2$ be smooth cubic threefolds over $k$. If the intermediate Jacobians $J(X_1)$ and $J(X_2)$ of $X_1$ and $X_2$ are isomorphic as principally polarised abelian varieties over $k$, then $X_1 \cong X_2$.
\end{theorem}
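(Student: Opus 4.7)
The plan is to combine Galois descent with Beauville's Torelli theorem \cite{Beau82} over $\bar k$ and the faithful action of automorphisms of a cubic threefold on its $\ell$-adic cohomology due to Pan \cite[Thm.~1.2]{Pan}. Fix an isomorphism $\phi \colon J(X_1) \to J(X_2)$ of principally polarised abelian varieties over $k$. Since Achter's construction \cite{Ach} of the intermediate Jacobian is compatible with base change along field extensions (in characteristic different from $2$), $\phi$ induces a $\Gal(\bar k/k)$-equivariant isomorphism $\phi_{\bar k}\colon J(X_{1,\bar k}) \to J(X_{2,\bar k})$ of ppav over $\bar k$.

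By the refined form of Beauville's theorem, there exists an isomorphism $f\colon X_{1,\bar k} \to X_{2,\bar k}$ of cubic threefolds such that $J(f) = \epsilon \cdot \phi_{\bar k}$ for some $\epsilon \in \{\pm 1\}$. For each $\sigma \in \Gal(\bar k/k)$, form the Galois $1$-cocycle $c_\sigma := f^{-1}\circ \sigma(f) \in \Aut(X_{1,\bar k})$. Since both $\phi_{\bar k}$ and $\epsilon$ are $\Gal$-invariant, applying the intermediate Jacobian functor yields
\[
J(c_\sigma) = J(f)^{-1}\circ \sigma(J(f)) = (\epsilon\phi_{\bar k})^{-1}\circ (\epsilon\phi_{\bar k}) = \mathrm{id}.
\]
Thus $c_\sigma$ is an automorphism of $X_{1,\bar k}$ acting trivially on $J(X_{1,\bar k})$, and hence trivially on $\mathrm{H}^3(X_{1,\bar k,\et},\QQ_\ell)$ via the canonical identification with $\mathrm{H}^1(J(X_{1,\bar k}),\QQ_\ell)$ up to Tate twist. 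Pan's theorem (cf.\ Theorem \ref{thm:ind_auts}) then forces $c_\sigma = \mathrm{id}$ for every $\sigma$, which is precisely the cocycle condition for $f$ to descend to an isomorphism $X_1 \to X_2$ over $k$.

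The step requiring the most care is the appeal to the \emph{refined} form of Beauville's Torelli theorem: not only does one need that isomorphic principally polarised intermediate Jacobians imply isomorphic cubic threefolds, but also that the given isomorphism $\phi_{\bar k}$ is realised, up to a uniform global sign, as $J(f)$ for some $f$. Without this refinement one cannot guarantee that the Galois twist of $J(f)$ equals $J(f)$, and the descent breaks down. Modulo this input and the base-change compatibility of Achter's intermediate Jacobian, the proof reduces entirely to the Galois descent computation above, and the characteristic $\neq 2$ hypothesis enters only through these two cited results.
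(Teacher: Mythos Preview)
Your approach is coherent but differs substantially from the paper's and leans on a stronger input.  The paper deduces the theorem from a stack-theoretic statement: the intermediate Jacobian morphism $J\colon \mathcal{C}_{3,3,\ZZ[1/2]}\to\mathcal{A}_{5,\ZZ[1/2]}$ is separated, representable by schemes, and universally injective.  Pan's theorem (injectivity of $\Aut X\to\Aut J(X)$) is exactly the faithfulness condition making $J$ representable by algebraic spaces; Beauville's \emph{unrefined} Torelli theorem then gives injectivity on geometric points, which together with separatedness upgrades to universal injectivity.  The conclusion over an arbitrary $k$ follows because the fibre of $J$ over any $k$-point of $\mathcal{A}_5$ is then a scheme with at most one $k$-point, with no need ever to lift a specific isomorphism of Jacobians to one of threefolds.

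By contrast, your descent hinges on the \emph{refined} statement you yourself flag: that every ppav isomorphism $J(X_{1,\bar k})\cong J(X_{2,\bar k})$ is $\pm J(f)$ for some $f$, equivalently that $\Aut(X_{\bar k})\to\Aut(J(X)_{\bar k})/\{\pm 1\}$ is surjective.  This is strictly stronger than what \cite{Beau82} is usually invoked for.  It is plausibly extractable from Beauville's reconstruction of $X$ via the singular locus of $\Theta$, but it needs its own justification; without it, knowing only that $J(c_\sigma)$ is a coboundary in $\Aut(J(X_1)_{\bar k})$ does not let you trivialise $c_\sigma$ in $\Aut(X_{1,\bar k})$, since an injection of finite groups need not induce an injection on non-abelian $H^1$.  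The paper's route sidesteps this entirely --- and in fact yields the refined statement as a byproduct, since the fibre having a single geometric point forces $\Aut X_{\bar k}\to\Aut J(X)_{\bar k}$ to be bijective.

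One minor technical point: for imperfect $k$ the descent should be run over $k^s$ rather than $\bar k$.  This is harmless here because $\mathcal{C}_{3,3}$ is Deligne--Mumford, so the relevant Isom scheme is finite \'etale and every $\bar k$-point already lies in $k^s$.
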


Whilst finishing this paper, we learned of the recent results of  Chen-Pan-Zhang  \cite{CPZ15}. 
 Here the aforementioned methods relying on infinitesimal Torelli are used in a way similar to \cite{JL} to prove a version of Theorem \ref{thm:ind_auts} for complete intersections, including some cases of positive characteristic for which the infinitesimal Torelli theorem holds. Our paper deals with tame automorphisms in arbitrary characteristic and gives a new and completely different elementary proof using the Lefschetz trace formula.

\begin{ack} 
We thank Olivier Benoist for discussions on level structures. We thank  Benjamin Bakker, Bas Edixhoven,  Dragos Fratila, Frank Gounelas, David Holmes,   Robin de Jong, Max Lieblich, Martin Olsson, Jan Steffen M\"{u}ller and David Rydh for helpful conversations. We are grateful to the referee for some useful comments. We also thank Dingxin Zhang for pointing out a mistake in an earlier version of this paper. The first-named author gratefully acknowledges
the support of SFB/Transregio 45.
\end{ack}

\begin{con}

For $X$ a scheme over a field $k$, we let $\Aut(X)$ be the group of automorphisms of $X$ over $k$.
If $B$ is a  scheme and $N\neq 0$ is an integer, then we write $B_{\ZZ[1/N]}$ for $B \times_{\Spec \ZZ} \Spec \ZZ[1/N]$.

\end{con}

\section{Tame linear automorphisms act faithfully on cohomology}\label{sec:aut}
Let $k$ be a field and let $\ell$ be a prime number which is invertible in $k$.
Let $X$ be a smooth hypersurface of degree $d$ in $\mathbb P^{n+1}_k$. Recall that, by the Lefschetz hyperplane section theorem, the cohomology ring $\mathrm{H}^*(X_{\bar k,\et},\ZZ_\ell)$ is torsion free, and that for $i \neq n$, the group $\mathrm{H}^i(X_{\bar k,\et},\ZZ_\ell)$ is trivial if $i$ is odd and isomorphic to $\ZZ_\ell$ if $i$ is even. In particular, the only ``interesting'' cohomology group is $\mathrm{H}^n(X_{\bar k,\et},\ZZ_\ell)$.

Denote by $\Lin(X)$ the group of \emph{linear automorphisms} of the $k$-scheme $X$, i.e.~those automorphism which are induced by an automorphism of the ambient projective space. When $d\geq 3$, it is known that $\Lin(X)$ is finite \cite[Thm.~3.1]{Ben13}. Moreover, if $(d,n) \neq (3,1),(4,2)$,
then $\Lin(X) = \Aut(X)$ \cite[Thm.~3.1]{Ben13}.

\begin{proposition}\label{prop:ind_auts}  Let $d\geq 3$ and $n\geq 1$ be integers such that $(d,n) \neq (3,1)$. Let $k$ be a field and let $\ell$ be a prime number which is invertible in $k$.
	Let $X$ be a smooth hypersurface of degree $d$ in $\mathbb P^{n+1}_k$ and let $\sigma \in \Lin(X)$ be non-trivial. If $\mathrm{char}(k) = 0$ or the order of $\sigma$ is coprime to $\mathrm{char}(k)$, then $\sigma$ acts non-trivially on $\mathrm{H}^n(X_{\bar k,\et},\QQ_\ell)$.
\end{proposition}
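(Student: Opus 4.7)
The plan is to prove the contrapositive: assume $\sigma$ acts trivially on $\mathrm{H}^n(X_{\bar k,\et},\QQ_\ell)$ and deduce $\sigma = 1$. After base-changing to $\bar k$, we lift $\sigma$ to a linear automorphism $\tilde\sigma$ of $\PP^{n+1}_{\bar k}$ of the same order. Since $\ord(\sigma)$ is coprime to $\chr(k)$, $\tilde\sigma$ is diagonalisable; we fix coordinates in which $\tilde\sigma = \mathrm{diag}(\zeta^{a_0},\dots,\zeta^{a_{n+1}})$ for a primitive $m$-th root of unity $\zeta$. As $\tilde\sigma$ preserves $\OO_{\PP^{n+1}}(1)$, it fixes the hyperplane class $h\in\mathrm{H}^2(X_{\bar k,\et},\QQ_\ell)$, and by the Lefschetz hyperplane theorem $\sigma$ therefore acts trivially on $\mathrm{H}^i(X_{\bar k,\et},\QQ_\ell)$ for every $i\neq n$. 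Combined with the assumption in degree $n$, the Lefschetz number of $\sigma$ equals $\chi(X_{\bar k})$.

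Next I would apply the Grothendieck--Lefschetz trace formula. Because $\sigma$ has finite order coprime to $\chr(k)$, the fixed locus $X^\sigma_{\bar k}$ is smooth, and the trace formula yields $L(\sigma) = \chi(X^\sigma_{\bar k})$; thus the assumption forces
$$\chi(X^\sigma_{\bar k}) = \chi(X_{\bar k}).$$
The fixed locus admits a very explicit description: decomposing $\bar k^{n+2} = \bigoplus_j V_j$ into $\tilde\sigma$-eigenspaces, one has $(\PP^{n+1}_{\bar k})^{\tilde\sigma} = \bigsqcup_j \PP(V_j)$, and so $X^\sigma_{\bar k} = \bigsqcup_j (X\cap \PP(V_j))$, where each component is either all of $\PP(V_j)$ or a smooth hypersurface of degree $d$ in $\PP(V_j)$ (smoothness comes from the classical fact that fixed loci of tame finite-order actions on smooth varieties are smooth).

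The final step, which is the main obstacle, is to derive the contradiction. Inserting the closed formula
$$\chi(\text{smooth degree } d \text{ hypersurface in } \PP^m) = \frac{(1-d)^{m+1}-1}{d} + (m+1)$$
into the equation $\chi(X^\sigma_{\bar k}) = \chi(X_{\bar k})$ produces an explicit polynomial identity in the eigenspace dimensions $\dim V_j$, which I would analyse by cases depending on which $\PP(V_j)$ lie inside $X$. The goal is to show that the only solution is $\tilde\sigma$ scalar, so that $\sigma = 1$. The delicate part is ruling out accidental numerical cancellations: the hypothesis $(d,n)\neq(3,1)$ enters precisely here, since on an elliptic plane cubic a translation by a nontrivial $3$-torsion point is a fixed-point-free linear automorphism with $\chi(X^\sigma)=0=\chi(X)$, and a test factor such as $(1-d)^n(d-2)+2$ happens to vanish at $(d,n)=(3,1)$; for all other admissible $(d,n)$ this factor (and its analogues in the multi-eigenspace case) is nonzero, blocking the cancellation.
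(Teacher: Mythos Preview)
Your strategy is exactly the paper's: reduce to $\bar k$, diagonalise the tame lift $\tilde\sigma$, observe that triviality on $\mathrm{H}^n$ forces the Lefschetz number to equal $\chi(X)$, apply the trace formula to get $\chi(X)=\chi(X^\sigma)$, and describe $X^\sigma$ as the disjoint union of the intersections $X\cap\PP(V_j)$, each of which is empty, all of $\PP(V_j)$, or a smooth degree-$d$ hypersurface in $\PP(V_j)$.

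The only place your write-up falls short is the last paragraph, which is precisely where the substance of the argument lies. You assert that ``a test factor such as $(1-d)^n(d-2)+2$'' and ``its analogues in the multi-eigenspace case'' are nonzero for $(d,n)\neq(3,1)$, but you neither derive these factors nor verify the claim; as stated this is not a proof. The paper does not proceed case-by-case via such factors. Instead, writing $m_i=\dim V_i$ and sorting the components into $r$ hypersurface pieces, $s-r$ linear pieces, and $t-s$ empty pieces, the Euler-characteristic equation becomes
\[
(1-d)^{n+2} \;=\; 1 - r - d(t-s) + \sum_{i=1}^r (1-d)^{m_i},
\]
and one finishes by the elementary inequalities $n+2\le \tfrac14(d-1)^{n+2}$ (for $n\ge2$) and $\sum_i (d-1)^{m_i}\le \tfrac34(d-1)^{n+2}$ (for $t\ge2$), which together force the right-hand side to be strictly smaller than $(d-1)^{n+2}$ in absolute value. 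The boundary cases $n=1$ (where one just counts fixed points) and $r=t-s=0$ are disposed of separately. You should replace your heuristic ``test factor'' discussion with this explicit estimate, or with an equally explicit alternative.
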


 \begin{proof}
	To prove the result, we may assume that $k$ is algebraically closed.
	We may also assume that $\mathrm{char}(k)> 0$, as when $\mathrm{char}(k)= 0$ the result is
	a special case of \cite[Prop.~2.16]{JL}.
	
	Let $\sigma \in \Lin(X)$ be non-trivial.
	As the order of $\sigma$ is coprime to $\mathrm{char}(k)$, 
	we may decompose $\mathrm{H}^0(X, \OO_X(1))$
	into a direct sum of $t$ eigenspaces $\mathrm{H}^0(X, \OO_X(1)) = \bigoplus_{i=1}^t V_i$
	with dimensions $m_i \geq 1$. Note that 
	\begin{equation} \label{eqn:m_i}
		m_1 +\cdots + m_t = n+2. 
	\end{equation} 
	The fixed locus $X^\sigma$ of $\sigma$ acting on $X$ is
	the disjoint union
	$\bigsqcup_{i=1}^t X_i$, where $ X_i = X \cap \PP(V_i)$
	and $\PP(V_i)\subset \PP^{n+1}_k$ denotes the corresponding projective subspace.
	As the order of $\sigma$ is coprime to $\mathrm{char}(k)$, the fixed locus
	$X^\sigma$ is smooth \cite[Prop.~A.8.10]{CGP}, hence each $X_i$ is smooth.
	An elementary argument, given by  choosing a basis for each $V_i$, reveals
	three possible cases for the $X_i$. Namely, 
	on reordering the $X_i$ if necessary, there exists $0 \leq r \leq s \leq t$ such that the following hold.
	\begin{itemize}
		\item If $1 \leq i \leq r$, then $X_i$ is a smooth hypersurface of degree $d$
			in $\PP^{m_i-1}_k$.
		\item If $r < i \leq s$, then $X_i \cong \PP^{m_i-1}_k$.
		\item If $s < i \leq t$, then $X_i = \emptyset$.
	\end{itemize}
	
	Assume now that $\sigma$ acts trivially on $\mathrm{H}^n(X_{\bar{k},\et},\QQ_\ell)$.
	Then $\tr(\sigma^\ast , \mathrm{H}^*(X_{\bar{k},\et},\QQ_\ell))$ equals 
	the  $\ell$-adic Euler characteristic  		
	$\chi(X)$ of 	$X$.
	The Lefschetz trace formula 
	(see e.g. (III.$4.11.4)$, p.$111$ of \cite{SGA5})
	then implies that
	\begin{equation}\label{eqn:ltf}
	\chi(X) = \tr(\sigma^\ast  ,  \mathrm{H}^\ast (X_{\bar{k},\et},\QQ_\ell))  = \chi(X^\sigma).
	\end{equation}
	We will derive a contradiction using the well-known formula
	\begin{equation} \label{eqn:chi}
		 \chi(X) = n + 2 + \frac{(1- d)^{n+2} -1}{d},
	\end{equation}
	which can be found in the discussion following \cite[Cor.~2.5]{Chenevert}, for example.
	Applying the Lefschetz trace formula (\ref{eqn:ltf}) and using (\ref{eqn:chi})  we obtain
	\begin{align*}
	n+ 2 + \frac{(1-d)^{n+2} -1}{d} 
	 = \sum_{i=1}^r\left( m_i +\frac{(1- d)^{m_i}  -1}{d}\right)
	+\sum_{i=r+1}^s m_{i}.
	\end{align*}
	Using \eqref{eqn:m_i} and rearranging, we find that
	\begin{equation} \label{eqn:hypersurface}
		(1-d)^{n+2} = 1 - r - d(t-s) + \sum_{i=1}^r (1-d)^{m_i}.
	\end{equation}
	We shall use this formula to obtain a contradiction.
	We first consider some special cases. If $n=1$ then the Lefschetz trace formula yields
	$$\# X^\sigma = 2 - (d-1)(d-2),$$
	which is a contradiction, as we assume that $d > 3$ when $n=1$.
	If $t=s$ and $r=0$ then \eqref{eqn:hypersurface} cannot
	be satisfied, since $d \geq 3$. 	
	
	Consider now the remaining case where $r + (t-s) \geq 1$ and $n \geq 2$.
	Note that $t \geq 2$ as $\sigma$ is non-trivial.
	We find that
	\begin{align} 
		|1 - r - d(t-s) +& \sum_{i=1}^r (1-d)^{m_i}| \nonumber \\
		& \leq (t-s) +r- 1 + (d-1)(t-s) + \sum_{i=1}^r (d-1)^{m_i} \nonumber  \\
		& \leq t -1 + \sum_{i=1}^{t} (d-1)^{m_i}. \label{eqn:easy}
	\end{align}	
	To proceed, we require the following elementary inequalities.
	\begin{align}
		n +2 \leq (1/4)\cdot x^{n+2} \quad &\mbox{ for }n \geq 2 \mbox{ and } x\geq 2. \label{eqn:1} \\
		\sum_{i=1}^t x^{m_i} \leq (3/4)\cdot x^{n+2} \quad 
		&\mbox{ for }n \geq 1, m_i\geq 1, t\geq 2 \mbox{ and } x\geq 2. \label{eqn:2}
	\end{align}
	The inequality \eqref{eqn:1} is trivial. For \eqref{eqn:2}, on using \eqref{eqn:m_i} we have
	\begin{align*}
		\sum_{i=1}^t x^{m_i} &= x^{n+2}\left(\frac{1}{x^{m_2 + \cdots +m_t}} + \cdots +
		\frac{1}{x^{m_1 + \cdots + m_{t-1}}}\right) \\
		&\leq x^{n+2}\left(\frac{1}{2^{m_2 + \cdots + m_t}} + \cdots +
		\frac{1}{2^{m_1 + \cdots + m_{t-1}}}\right).
	\end{align*}
	For $t\geq 3$ we obtain 
	$$\frac{1}{2^{m_2 + \cdots + m_t}} + \cdots +	\frac{1}{2^{m_1 + \cdots + m_{t-1}}}
	\leq \frac{1}{2^{t-1}} + \cdots +	\frac{1}{2^{t-1}} = \frac{t}{2^{t-1}} \leq \frac{3}{4},$$
	as required. For $t=2$,  using \eqref{eqn:m_i} we find that there is some $i$ for which
	$m_i \geq 2$ (as $n\geq 1$),
	which also yields \eqref{eqn:2}, as required. 
	
	Applying \eqref{eqn:1} and \eqref{eqn:2}
	to \eqref{eqn:easy}  we find that
	\begin{align*} 
		|1 - r - d(t-s) + \sum_{i=1}^r (1-d)^{m_i}| 
		& \leq (1/4)\cdot (d-1)^{n+2} -1 +(3/4)\cdot (d-1)^{n+2} \\
		& < |1-d|^{n+2},
	\end{align*}	
	which contradicts \eqref{eqn:hypersurface}.
	This completes the proof.
\end{proof}

\begin{proof}[Proof of Theorem \ref{thm:ind_auts}]
If $(d,n) \neq (4,2)$ then this  follows immediately from Proposition \ref{prop:ind_auts}, as we have
$\Lin(X) = \Aut(X)$ in such cases. So let $(d,n) = (4,2)$ and let $\sigma \in \Aut X$ act trivially on $\mathrm{H}^2(X_{\bar k,\et},\QQ_\ell)$. Then it also acts trivially on $\Pic X_{\bar{k}}$, as the cycle class map is injective here. Hence $\sigma \in \Lin(X)$, and so the result again follows from  Proposition \ref{prop:ind_auts}. 
\end{proof}

We obtain the following corollary, which will be required for attaching level structure. 

\begin{corollary}\label{cor:ind_aut} Let $d\geq 3$ and $n\geq 1$ be integers such that $(d,n) \neq (3,1)$. 
Let $k$ be a field and let $N\geq 3$ be an integer which is invertible in $k$. Suppose that $\mathrm{char}(k) = 0$ 
or $\mathrm{char}(k)>0$ is coprime to $a_{d,n}$.
If  $X$ is a smooth hypersurface of degree $d$ in $\mathbb P^{n+1}_k$, then the homomorphism \[\Aut(X) \to \Aut(\mathrm{H}^n(X_{\bar k,\et},\ZZ/N\ZZ)) \] is injective.
\end{corollary}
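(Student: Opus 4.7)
The plan is to deduce the corollary from Theorem~\ref{thm:ind_auts}. Let $\sigma\in\Aut(X)$ act trivially on $\mathrm{H}^n(X_{\bar k,\et},\ZZ/N\ZZ)$; we aim to show $\sigma=1$. The overall strategy is to establish three intermediate facts: (i) $\sigma\in\Lin(X)$, so in particular $\sigma$ has finite order; (ii) the order of $\sigma$ is coprime to $\chr k$; and (iii) $\sigma$ acts trivially on $\mathrm{H}^n(X_{\bar k,\et},\QQ_\ell)$ for some prime $\ell\mid N$. Once these are in place, Theorem~\ref{thm:ind_auts} immediately yields $\sigma=1$.

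For step (i), when $(d,n)\neq(4,2)$ this is immediate from \cite[Thm.~3.1]{Ben13}, since $\Aut(X)=\Lin(X)$ is finite. In the quartic surface case $(d,n)=(4,2)$ one argues as at the end of the proof of Theorem~\ref{thm:ind_auts}: combined with step (iii), triviality of $\sigma$ on $\mathrm{H}^2(X_{\bar k,\et},\QQ_\ell)$ implies triviality on $\Pic X_{\bar k}$ via the injective cycle class map, and hence $\sigma\in\Lin(X)$, which is finite. Step (ii) then follows from the definition of $a_{d,n}$: if $p=\chr k>0$ divided the order $m$ of $\sigma\in\Lin(X)$, then $\sigma^{m/p}$ would be a linear automorphism of exact order $p$ of a smooth hypersurface of degree $d$ in $\PP^{n+1}_{\bar k}$, so $p\mid a_{d,n}$ by Definition~\ref{defn:adn}, contradicting the hypothesis.

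For step (iii), select a prime $\ell\mid N$ (necessarily invertible in $k$) for which the principal congruence kernel of level $\ell^{v_\ell(N)}$ in $\GL(\mathrm{H}^n(X_{\bar k,\et},\ZZ_\ell))$ is torsion-free. Such an $\ell$ exists for every $N\ge 3$ by Minkowski's lemma: take any odd prime divisor of $N$ if one exists, and $\ell=2$ otherwise (in which case $N$ is a power of $2$ with $N\ge 4$, so $v_2(N)\ge 2$). Since $\sigma$ is of finite order coprime to $\chr k$ and acts trivially modulo $\ell^{v_\ell(N)}$, Minkowski forces $\sigma$ to act trivially on $\mathrm{H}^n(X_{\bar k,\et},\ZZ_\ell)$ and hence on $\mathrm{H}^n(X_{\bar k,\et},\QQ_\ell)$; then Theorem~\ref{thm:ind_auts} finishes the proof. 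The main subtlety is the logical interdependence of (i) and (iii) in the case $(d,n)=(4,2)$, which one must break by running Minkowski directly at the level of the Picard lattice (using that $\sigma$ acts trivially modulo $N$ on $\Pic X_{\bar k}$, obtained from triviality modulo $N$ on $\mathrm{H}^2$ via the cycle class map) before invoking the Pic/linear reduction.
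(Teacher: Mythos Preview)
For $(d,n)\neq(4,2)$ your argument is essentially the paper's: both pass to a prime power $\ell^{v_\ell(N)}\geq 3$ dividing $N$, use finiteness of $\Lin(X)=\Aut(X)$ together with the Minkowski--Serre lemma to deduce that $\sigma$ acts trivially on $\mathrm{H}^n(X_{\bar k,\et},\ZZ_\ell)$, observe that the hypothesis on $a_{d,n}$ forces the order of $\sigma$ to be prime to $\chr k$, and then invoke Theorem~\ref{thm:ind_auts}. (The paper simply begins ``Let $\sigma\in\Lin(X)$'' and does not separately treat the passage from $\Aut(X)$ to $\Lin(X)$.)

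Your proposed fix for $(d,n)=(4,2)$, however, has a genuine gap. Minkowski's lemma only says that the principal congruence kernel in $\GL_r(\ZZ)$ (or $\GL_r(\ZZ_\ell)$) is \emph{torsion-free}; to pass from ``$\sigma$ acts trivially on $\Pic X_{\bar k}\otimes\ZZ/N\ZZ$'' to ``$\sigma$ acts trivially on $\Pic X_{\bar k}$'' you must already know that the action of $\sigma$ on $\Pic X_{\bar k}$ has finite order --- precisely what step~(i) was supposed to supply --- so the circularity is not broken. In fact the assertion with $\Aut(X)$ in place of $\Lin(X)$ is simply false for $(d,n)=(4,2)$: a smooth quartic K3 surface can have infinite automorphism group, and since $\Aut(\mathrm{H}^2(X_{\bar k,\et},\ZZ/N\ZZ))$ is finite, the map $\Aut(X)\to\Aut(\mathrm{H}^2(X_{\bar k,\et},\ZZ/N\ZZ))$ cannot then be injective. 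The paper's proof, and its application in \S\ref{sec:level} (where the inertia groups of the stack $\mathcal C_{d,n}$ are the groups $\Lin(X)$), should be read with $\Lin(X)$ throughout.
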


\begin{proof} We may assume that $N$ is a power of some prime number $\ell$. Let $\sigma \in \Lin(X)$ be a linear automorphism which acts trivially on $\mathrm{H}^n(X_{\bar k, \et},\ZZ/N \ZZ)$. Consider the action of $\sigma$ on $\mathrm{H}^n(X_{\bar k,\et},\ZZ_\ell)$, which we view as given by some $\ZZ_\ell$-matrix $A$ such that $A \bmod N$ is the identity matrix. As $\Lin(X)$ is finite  and $\mathrm{H}^n(X_{\bar k,\et},\ZZ_\ell)$ is torsion free, we see that $A$ is semi-simple and that the eigenvalues of $A \otimes \bar{\ZZ}_\ell$ are roots of unity which are equal to $1 \bmod N$, as $A \bmod N$ is the identity matrix. Therefore, as $N \geq 3$, a lemma of Minkowski and Serre (see the appendix of \cite{Serre} or the more general  \cite[Thm.~6.7]{SZ96}) implies that each eigenvalue of $A \otimes \bar{\ZZ}_\ell$ is in fact equal to $1$, and hence $A$, being semi-simple, is the identity matrix. Thus $\sigma$ acts trivially on $\mathrm{H}^n(X_{\bar k,\et},\ZZ_\ell)$. However, by our assumptions, the order of  $\sigma$ is coprime to the characteristic of $k$ and hence $\sigma$ is trivial by Theorem \ref{thm:ind_auts}, as required.
\end{proof}

\begin{remark}\label{remark}
Throughout this section we excluded the necessary case $(d,n) = (3,1)$. Indeed, let $\ell$ be a prime and let $k$ be an algebraically closed field with $\ell\in k^\ast$. If $X$ is a smooth cubic in $\mathbb P^2_k$, then  \[\ker(\Lin(X) \to \Aut(\mathrm{H}^1(X_{\et},\QQ_\ell))) \cong J[3](k),  \]
where $J[3]$ denotes the $3$-torsion group of the Jacobian $J$ of $X$. This kernel is non-trivial if and only if either $\chr(k) \neq 3$ or $\chr(k) = 3$ and $J$ is ordinary. In particular, the hypothesis $(d,n) \neq (3,1)$ can not be omitted in Theorem \ref{thm:ind_auts}. Quadric hypersurfaces are also easily seen not to satisfy the conclusion of Proposition \ref{prop:ind_auts}.
\end{remark}

\begin{remark}
The proof of Proposition \ref{prop:ind_auts} breaks down for wild automorphisms, i.e.~those automorphisms $\sigma$ whose order is divisible by $\mathrm{char}(k)$. Here $X^{\sigma}$ need no longer be smooth, hence the Lefschetz trace formula \eqref{eqn:ltf} does not take such a simple form (see \cite[\S III.4.11]{SGA5}). The tame case is however sufficient for the application to level structures for hypersurfaces.
\end{remark}

\section{Level structure} \label{sec:level}

In this section we give the promised application to the moduli space of smooth hypersurfaces with level structure.

\subsection{The stack of smooth hypersurfaces} Let $d \geq 3$ and $n\geq 1$.
Let $\mathrm{Hilb}_{d,n}$ be the Hilbert scheme of degree $d$ smooth hypersurfaces in $\mathbb P^{n+1}$ over $\ZZ$. There is a natural  left $\mathrm{PGL}_{n+2}$-action on $\mathrm{Hilb}_{d,n}$. The following quotient stack $$\mathcal C_{d,n}~:=~[\mathrm{PGL}_{n+2}\backslash \mathrm{Hilb}_{d,n}]$$ is the (moduli) stack of smooth hypersurfaces of degree $d$ in $\mathbb P^{n+1}$. (We   use \cite[\S 2.4.2]{LMB} as our main reference for quotient stacks. Note that, however, we consider left actions, whereas \emph{loc.~cit.} considers right actions.)
  A precise description of the functor of points of $\mathcal C_{d,n}$ is given in \cite[\S 2.3.2]{BenoistThesis}. For a smooth hypersurface $X$ of degree $d$ in $\mathbb P^{n+1}$ over an algebraically closed field $k$, the group $\Lin(X)$ is the group of $k$-points of the inertia group scheme of the corresponding object in $\mathcal C_{d,n}(k)$.

It follows from the arguments given in \cite[Cor.~2.5 and Prop.~4.2]{GIT} and \cite{Seshadri} that $\mathcal C_{d,n}$ is a smooth finite type separated algebraic stack with finite diagonal over $\mathbb Z$ whose coarse moduli space $\mathcal C_{d,n}^{\mathrm{coarse}}$ is an affine scheme. Moreover, if $(d,n) \neq (3,1)$, then $\mathcal C_{d,n}$ is a Deligne-Mumford stack over $\mathbb Z$; see  \cite[Thm.~1.6]{Ben13}.

\subsection{Level structure}

Let $N\geq 1$ be a positive integer.
Let  $\mathcal C_{d,n}^{[N]}$ be the category fibred in groupoids over $\ZZ[1/N]$ whose objects are triples $(S,f:X\to S, \phi)$, where $S$ is a scheme over $\ZZ[1/N]$, $f:X\to S$ is an object of $\mathcal C_{d,n}(S)$ and $\phi: R^n_{\et} f_\ast (\ZZ/N\ZZ) \to (\ZZ/N\ZZ)_S^{b_{d,n}}$ is an isomorphism of constructible sheaves on $S$. Here $b_{d,n}$ denotes the $n$th Betti number of some (hence any) smooth hypersurface of degree $d$ in $\PP^{n+1}$.  A morphism from a triple $(S,f:X\to S,\phi)$ to a triple $(S^\prime, f^\prime:X^\prime\to S^\prime,\phi^\prime)$ is defined to be a pair $(\rho, \varphi)$, where:
\begin{itemize}
 \item $\rho$ is a morphism from $(S, f:X\to S)$ to $(S',f':X'\to S')$ in $\mathcal C_{d,n}$, and
 \item $\varphi $ is an isomorphism of sheaves from $R^n_{\et} f_\ast(\ZZ/N\ZZ)$ to the pull-back of the sheaf $R^n_{\et} f'_\ast(\ZZ/N\ZZ)$ to $S$ which respects   the isomorphisms $$\phi:R^n_{\et} f_\ast (\ZZ/N\ZZ) \to (\ZZ/N\ZZ)_S^{b_{d,n}} \quad  \textrm{and} \quad \phi':R^n_{\et} f'_\ast (\ZZ/N\ZZ) \to (\ZZ/N\ZZ)_{S'}^{b_{d,n}}.$$
 \end{itemize} 
 
We call $\mathcal C_{d,n}^{[N]}$ the stack of smooth $n$-dimensional hypersurfaces of degree $d$ with \emph{level $N$ structure}.

\begin{definition}\label{defn:adn}
For $d\geq 3$ and $n\geq 1$, let $A_{d,n}$  be the set of prime numbers $p$ such that there exist an algebraically closed field $k$ and a smooth hypersurface $X$ of degree $d$ in $\mathbb P^{n+1}_k$ with a linear automorphism of order $p$. As $\mathcal C_{d,n}$ is a finite type   algebraic stack over $\ZZ$ with   finite diagonal, a standard stratification argument shows that the set $A_{d,n}$ is finite. Define $$a_{d,n} := \prod_{p\in A_{d,n}} p.$$  Note that, if $N$ is coprime to $ a_{d,n}$, then  $\mathcal C_{d,n,\ZZ[1/N]}$ is a tame stack   \cite[Thm.~3.2]{AOVTame}. 
\end{definition}
 
 We now show that, for $N\geq 3$ coprime to $ a_{d,n}$ and $(d,n) \neq (3,1)$, the stack  $\mathcal C_{d,n}^{[N]}$ is in fact an affine scheme over $\mathbb Z[1/N]$.

\begin{theorem}\label{thm:level} Let $N\geq 1$ be an integer. Suppose that $(d,n)\neq (3,1)$.
\begin{enumerate}
\item The stack $\smash{\mathcal C_{d,n}^{[N]}}$ is a $\mathrm{GL}_{b_{d,n}}(\ZZ/N\ZZ)$-torsor over $\mathcal C_{d,n,\ZZ[1/N]}$.
\item  The stack $\mathcal C_{d,n}^{[N]}$ is smooth  finite type separated and Deligne-Mumford over $\ZZ[1/N]$ with an affine coarse moduli space.
\item If $N\geq 3$ is coprime to $a_{d,n}$, then the stack \smash{$\mathcal C_{d,n}^{[N]} $} is representable by a smooth   affine scheme over $\ZZ[1/N]$. 
\end{enumerate} 
\end{theorem}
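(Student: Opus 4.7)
The plan is to establish parts (1), (2), (3) in sequence, each building on the previous.

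For part (1), the crucial input to verify is that for any $\ZZ[1/N]$-scheme $S$ and any smooth hypersurface family $f:X\to S$ in $\mathcal C_{d,n}(S)$, the étale sheaf $R^n_{\et} f_\ast(\ZZ/N\ZZ)$ is lisse of rank $b_{d,n}$ with free $\ZZ/N\ZZ$-stalks. This will follow from smooth and proper base change in étale cohomology, together with the torsion-freeness of $\mathrm{H}^n(X_{\bar s,\et},\ZZ_\ell)$ at geometric points for $\ell \mid N$ recalled at the start of \S\ref{sec:aut}. Granting this, trivializations $\phi$ exist étale-locally on $S$ and by construction form a $\mathrm{GL}_{b_{d,n}}(\ZZ/N\ZZ)$-torsor, so the forgetful $1$-morphism $\mathcal C_{d,n}^{[N]} \to \mathcal C_{d,n,\ZZ[1/N]}$ is a torsor for this finite constant group scheme, establishing (1).

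For part (2), the torsor structure from (1) means that $\mathcal C_{d,n}^{[N]} \to \mathcal C_{d,n,\ZZ[1/N]}$ is representable by a finite étale morphism of algebraic stacks. Smoothness, finite type, separatedness, and the Deligne--Mumford property all descend along finite étale covers, so each transfers from $\mathcal C_{d,n,\ZZ[1/N]}$ (whose properties are recalled in \S 3.1) to $\mathcal C_{d,n}^{[N]}$. The diagonal of $\mathcal C_{d,n}^{[N]}$ remains finite, so Keel--Mori supplies a coarse moduli space; functoriality of Keel--Mori along the finite étale torsor produces a finite morphism from $\mathcal C_{d,n}^{[N],\mathrm{coarse}}$ to the affine coarse scheme $\mathcal C_{d,n,\ZZ[1/N]}^{\mathrm{coarse}}$, forcing $\mathcal C_{d,n}^{[N],\mathrm{coarse}}$ to be affine.

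For part (3), assume $N\geq 3$ is coprime to $a_{d,n}$. By Corollary \ref{cor:ind_aut}, the map $\Aut(X) \to \Aut(\mathrm{H}^n(X_{\bar k,\et},\ZZ/N\ZZ))$ is injective for every smooth hypersurface $X/k$; hence every geometric stabilizer of $\mathcal C_{d,n}^{[N]}$ is trivial. A Deligne--Mumford stack with trivial inertia is an algebraic space, so $\mathcal C_{d,n}^{[N]}$ is an algebraic space. Composing the finite étale map from (1) with the finite Keel--Mori map from (2) realises $\mathcal C_{d,n}^{[N]}$ as an algebraic space finite over the affine scheme $\mathcal C_{d,n,\ZZ[1/N]}^{\mathrm{coarse}}$; a finite algebraic space over an affine scheme is itself an affine scheme. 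Smoothness is inherited from (2).

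The main obstacle I anticipate is the verification in (1) that $R^n_{\et} f_\ast(\ZZ/N\ZZ)$ is lisse of free rank $b_{d,n}$ on an arbitrary base, since this is what makes the torsor construction function and hence underpins all three parts. The remaining steps are formal consequences of standard facts about torsors, Keel--Mori and its functoriality, Corollary \ref{cor:ind_aut}, and the standard principle that an algebraic space finite over an affine scheme is an affine scheme.
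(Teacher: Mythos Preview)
Your proposal is correct and follows essentially the same route as the paper: establish the torsor structure via the forgetful morphism, pull back the properties of $\mathcal C_{d,n,\ZZ[1/N]}$ along this finite \'etale map, invoke Keel--Mori and the induced finite map on coarse spaces to get affineness, and then use Corollary~\ref{cor:ind_aut} to kill inertia. Your part~(1) is more detailed than the paper's one-line assertion, correctly isolating smooth--proper base change and torsion-freeness as the reason the sheaf is lisse of free rank $b_{d,n}$.

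One small wrinkle in your part~(3): the phrase ``composing the finite \'etale map from (1) with the finite Keel--Mori map from (2)'' does not quite typecheck. The map from (1) is $\mathcal C_{d,n}^{[N]}\to \mathcal C_{d,n,\ZZ[1/N]}$, while the finite map established in (2) is $\mathcal C_{d,n}^{[N],\mathrm{coarse}}\to \mathcal C_{d,n,\ZZ[1/N]}^{\mathrm{coarse}}$; these do not concatenate. If instead you mean to follow (1) by the coarse moduli map $\mathcal C_{d,n,\ZZ[1/N]}\to \mathcal C_{d,n,\ZZ[1/N]}^{\mathrm{coarse}}$, that map is proper but not a priori finite (nor representable). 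The paper sidesteps this entirely: once $\mathcal C_{d,n}^{[N]}$ is known to be an algebraic space, its own coarse moduli map is an isomorphism, and you already proved in (2) that $\mathcal C_{d,n}^{[N],\mathrm{coarse}}$ is affine. That is the cleanest way to close out (3).
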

\begin{proof}
	The structure as a $\mathrm{GL}_{b_{d,n}}(\ZZ/N\ZZ)$-torsor is given by the (representable) forgetful morphism $\smash{\mathcal C_{d,n}^{[N]}}\to \mathcal C_{d,n,\ZZ[1/N]}$. This proves $(1)$. 

Since $d\geq 3 $, $n\geq 1$ and $(d,n) \neq (3,1)$, the stack $\mathcal C_{d,n}$ is a smooth finite type Deligne-Mumford stack over $\ZZ$. As $\mathcal C_{d,n}^{[N]}\to \mathcal C_{d,n,\ZZ[1/N]}$ is a representable \'etale finite type morphism, we may pull-back an \'etale finite type presentation of $\mathcal C_{d,n, \ZZ[1/N]}$ to  $\mathcal C_{d,n}^{[N]}$ to find that  $\mathcal C_{d,n}^{[N]}$ is a smooth finite type Deligne-Mumford stack. Furthermore, since  $\mathcal C_{d,n}$ is a separated algebraic stack  and the morphism  $\smash{\mathcal C_{d,n}^{[N]}}\to \mathcal C_{d,n,\ZZ[1/N]}$ is finite,  the stack $\smash{\mathcal C_{d,n}^{[N]}}$ is  separated over $\ZZ[1/N]$ with finite inertia. In particular, by \cite{KeelMori}, the stack $\smash{\mathcal C_{d,n}^{[N]}}$ has a coarse moduli space, say \smash{$\mathcal C_{d,n}^{[N]}\to \mathcal C^{[N], \textrm{co}}_{d,n}$}. Let $\mathcal {C}^{\textrm{co}}_{d,n,\ZZ[1/N]}$ be the coarse moduli space of $\mathcal C_{d,n, \ZZ[1/N]}$. Since  $\smash{\mathcal C_{d,n}^{[N]}}\to~\mathcal C_{d,n,\ZZ[1/N]}$ is finite, the induced morphism  $\smash{\mathcal C^{[N], \textrm{co}}_{d,n}}~\to~\mathcal C^{\textrm{co}}_{d,n,\ZZ[1/N]}$ is finite. As $\mathcal C^{\textrm{co}}_{d,n,\ZZ[1/N]}$ is affine, it follows that \smash{$\mathcal C^{[N], \textrm{co}}_{d,n}$} is affine \cite[Thm.~A.2]{LMB}. This proves   $(2)$.
	
Now, to prove $(3)$, let  $N\geq 3$ be coprime to $a_{d,n}$. Note that   Corollary \ref{cor:ind_aut} implies that the geometric points of the stack $\smash{\mathcal C_{d,n}^{[N]}}$ have trivial automorphism groups. In particular, by \cite[Cor.~2.2.5.(1)]{Conrad},  the stack $\smash{\mathcal C_{d,n}^{[N]}}$ is an  algebraic space over $\ZZ[1/N]$. 
Thus, the coarse moduli space morphism  $\mathcal C_{d,n}^{[N]}\to  \mathcal C_{d,n}^{[N],\textrm{co}}$ is an isomorphism. This concludes the proof. 
\end{proof}

\begin{proof}[Proof of Theorem \ref{theorem2}]  
By Theorem \ref{thm:level}, it suffices to treat the case $(d,n) = (3,1)$. 
The stack $\mathcal{C}_{3,1, \ZZ[1/3]}$ is a smooth finite type separated Deligne-Mumford  stack over $\mathbb Z[1/3]$ with affine coarse moduli space. Let $\mathcal{M}$ be the stack of elliptic curves over $\mathbb Z[1/3]$, and let $\mathcal E\to \mathcal M$ be the universal family over $\mathcal M$. Let $U$ be an affine scheme over $\ZZ[1/3]$ equipped with a finite \'etale morphism $U\to \mathcal M$ (for instance, we may take $U = \mathcal Y(3)$ to be the fine moduli scheme of elliptic curves with full level $3$ structure over $\ZZ[1/3]$; see \cite[Cor.~4.7.2]{KatzMazur}). 

 By \cite[Prop.~6.1 and Prop.~6.4]{Bergh},  the morphism of stacks $\mathcal{C}_{3,1, \ZZ[1/3]}  \to \mathcal{M} $ induced by the Jacobian is a neutral gerbe  for the finite \'etale group scheme  $\mathcal E[3]$ of $3$-torsion points of $\mathcal E\to \mathcal M$ over $\ZZ[1/3]$, i.e.~$\mathcal{C}_{3,1, \ZZ[1/3]} \cong B(\mathcal E[3])$ as stacks over $\mathcal{M}$.
By pull-back, we find that the stack $\mathcal C_{3,1,\ZZ[1/3]}$ is uniformisable by the classifying stack $B(\mathcal E[3]_U)$ over $U$. We summarise these maps in the following diagram
\[
\xymatrix{  B(\mathcal E[3]_U) \ar[d]_{\textrm{neutral } \mathcal E[3]-\textrm{gerbe}} \ar[rr]_{\textrm{finite \'etale}} & & \mathcal C_{3,1,\ZZ[1/3]} \ar[d]^{\textrm{neutral } \mathcal E[3]-\textrm{gerbe}} 	\\ U \ar[rr]^{\textrm{finite \'etale}} & & \mathcal M.    }
\]
Since $\mathcal E[3]_U\to U$ is a finite \'etale group scheme, there is a natural finite \'etale morphism $U\to B(\mathcal E[3]_U)$. In particular, the composition $U\to B(\mathcal E[3]_U) \to \mathcal C_{3,1,\ZZ[1/3]}$ is finite \'etale, and hence $\mathcal{C}_{3,1, \ZZ[1/3]}$ is uniformisable by  $U$.
Thus, for all $N\geq 1$,  the stack   $\mathcal C_{3,1, \ZZ[1/(3N)]}$ is uniformisable by a smooth affine scheme over $\ZZ[1/3N]$. As $a_{3,1}$ is divisible by $6$, this concludes the proof.
\end{proof}
 
 \begin{remark}
 	Note that there are one-dimensional smooth finite type separated Deligne-Mumford stacks  over $\mathbb C$ which are not uniformisable; see \cite{BN}.
 \end{remark}

\section{A Torelli theorem}
We now  prove Theorem \ref{thm:Torelli}. By \cite[Thm.~B]{Ach}, the intermediate Jacobian gives rise to a morphism of stacks
$$J: \mathcal{C}_{3,3, \ZZ[1/2]} \to \mathcal{A}_{5,\ZZ[1/2]},$$
where $\mathcal{A}_{5}$ denotes the stack of principally polarised abelian fivefolds. Theorem \ref{thm:Torelli} follows immediately from the following more general result.

\begin{theorem}
	The morphism of stacks
	$$J: \mathcal{C}_{3,3, \ZZ[1/2]} \to \mathcal{A}_{5,\ZZ[1/2]},$$
	is separated, representable by schemes, and universally injective.
\end{theorem}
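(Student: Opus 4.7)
The statement packages three assertions about the intermediate Jacobian morphism $J$, and I will verify them in turn using Pan's theorem on automorphisms of cubic threefolds (cited in the introduction), Beauville's Torelli theorem, and standard stack-theoretic criteria. Separatedness is immediate: by \S\ref{sec:level} the source $\mathcal{C}_{3,3,\ZZ[1/2]}$ is a separated Deligne--Mumford stack, and $\mathcal{A}_{5,\ZZ[1/2]}$ is a classical separated Deligne--Mumford stack, so the relative diagonal of $J$ factors the (proper) absolute diagonal of the source through a monomorphism and is hence proper. The substantive content is thus representability by schemes and universal injectivity, and my strategy is to first establish representability by algebraic spaces, then upgrade to schemes using quasi-finiteness, and finally deduce universal injectivity.

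For representability by algebraic spaces, I use the criterion that this is equivalent to injectivity of $\mathrm{Aut}(X)\to\mathrm{Aut}(J(X),\Theta)$ at every geometric point $X\in\mathcal{C}_{3,3,\ZZ[1/2]}(\bar k)$. By Achter's construction \cite[Thm.~B]{Ach}, there is a functorial (and in particular $\mathrm{Aut}(X)$-equivariant) identification, after a Tate twist, of $\mathrm{H}^1(J(X)_{\bar k,\et},\QQ_\ell)$ with $\mathrm{H}^3(X_{\bar k,\et},\QQ_\ell)$; hence faithfulness of $\mathrm{Aut}(X)$ on $\mathrm{H}^3$ forces faithfulness on $J(X)$. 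Such faithfulness on $\mathrm{H}^3$ is precisely Pan's theorem \cite[Thm.~1.2]{Pan}, and is also a special case of Theorem~\ref{thm:ind_auts} whenever the automorphism in question is tame. To upgrade representability from algebraic spaces to schemes, I invoke Beauville's Torelli theorem over algebraically closed fields of characteristic $\neq 2$: if $(J(X_1),\Theta)\cong(J(X_2),\Theta)$ then $X_1\cong X_2$. Combined with the previous paragraph, this makes every geometric fibre of $J$ a finite reduced algebraic space; in particular $J$ is quasi-finite and separated. A separated quasi-finite morphism from an algebraic space to a scheme is itself a scheme (Stacks Project, Tag \texttt{03XX}, or Knutson), so base-changes of $J$ along schemes $S\to\mathcal{A}_{5,\ZZ[1/2]}$ are schemes, giving representability by schemes.

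Universal injectivity then follows from the same two inputs. Because $J$ is representable by schemes, universal injectivity can be checked fibrewise over geometric points of $\mathcal{A}_{5,\ZZ[1/2]}$, where it amounts to each fibre being a geometric monomorphism into its residue field, i.e.\ at most a single reduced point. Beauville's Torelli gives injectivity on isomorphism classes of geometric cubic threefolds, and representability by algebraic spaces rules out hidden stacky points in the fibres, so the scheme-theoretic fibres are indeed at most reduced single points. The main obstacle I anticipate is the careful tracking of the $\mathrm{Aut}(X)$-equivariance of the identification $\mathrm{H}^1(J(X),\QQ_\ell)\cong\mathrm{H}^3(X,\QQ_\ell)$, which is essential for transferring faithfulness of cohomological actions to faithfulness on the intermediate Jacobian and is not entirely formal from the statement of \cite[Thm.~B]{Ach}; it will require going through Achter's construction to extract this compatibility.
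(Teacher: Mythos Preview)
Your proposal is correct and follows essentially the same route as the paper: separatedness from separatedness of the source stack, representability by algebraic spaces via faithfulness of $\Aut(X)$ on $\mathrm{H}^3$ (Pan) transported through Achter's cohomological comparison, upgrade to schemes via Beauville's Torelli plus a quasi-finite/separated criterion, and universal injectivity from injectivity on geometric points. The only notable difference is that the paper dispatches the step you flag as the ``main obstacle'' --- the $\Aut(X)$-equivariance of $\mathrm{H}^1(J(X),\QQ_\ell)\cong \mathrm{H}^3(X,\QQ_\ell(1))$ --- simply by citing Achter's Prop.~3.6(a) together with functoriality of the intermediate Jacobian, so no additional unpacking of the construction is needed.
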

\begin{proof}
The proof is similar to the proof of \cite[Prop.~3.2]{JL}.
The separatedness of $J$ follows from a simple application of \cite[Thm.~1.7]{Ben13} (cf.~\cite[Lem.~2.6]{JL}). Let $k$ be a field of characteristic not equal to $2$ and let $\ell \in k^*$.
The intermediate Jacobian of a cubic threefold $X$ over $k$ comes with a canonical isomorphism $\mathrm{H}^3(X_{\bar k,\et},\QQ_\ell(1)) \cong \mathrm{H}^1(J(X)_{\bar k, \et},\QQ_\ell)$ \cite[Prop.~3.6(a)]{Ach}. As the construction of the intermediate Jacobian is functorial and $\Aut J(X)$ acts faithfully on $\mathrm{H}^1(J(X)_{\bar k, \et},\QQ_\ell)$, we deduce from \cite[Thm.~1.2]{Pan} that the natural map $\Aut X \to \Aut J(X)$ is injective. As in the proof of \cite[Prop.~3.1]{JL}, we then use \cite[Tag 04Y5]{stacks-project} and \cite[Cor.~2.2.7]{Conrad} to see that $J$ is representable by algebraic spaces. However, Beauville's Torelli theorem \cite{Beau82} implies that $J$ is injective on geometric points. That $J$ is representable by schemes now follows from   \cite[Thm.~A.2]{LMB}. Finally, as $J$ is injective on geometric points and representable by schemes, it follows from \cite[Tag 03MU]{stacks-project} that $J$ itself is universally injective.
\end{proof}

\bibliography{refslevel}{}
\bibliographystyle{plain}

\end{document}